\documentclass{scrartcl}
\usepackage{amsmath, amsthm, amssymb}
\newtheorem{thm}{Theorem}[section]
\newtheorem{prop}[thm]{Proposition}
\newtheorem{lem}[thm]{Lemma}

\newtheorem{rmk}[thm]{Remark}
\theoremstyle{definition}
\newtheorem{defn}[thm]{Definition}
\newtheorem*{notn}{Notation}
\theoremstyle{remark}

\DeclareMathOperator{\Hom}{Hom}
\DeclareMathOperator{\Stab}{Stab}
\newcommand{\qqq}{\mathbb{Q}}
\newcommand{\ccc}{\mathbb{C}}

\newcommand{\points}{p_1, \ldots, p_n}
\newcommand{\loops}{c_1, \ldots, c_n}
\newcommand{\ttuple}{(t_1, \ldots, t_n)}
\newcommand{\cjk}{\mathbb{C}_{(jk)}}
\newcommand{\vlg}{V^l_g\ttuple}
\newcommand{\vmg}{V^m_g\ttuple}
\newcommand{\lljk}{L^l_{jk}}
\newcommand{\llij}{L^l_{ij}}

\newcommand{\lqij}{L^q_{ij}}

\newcommand{\hstar}{H^*(S_g\ttuple; \qqq)}
\newcommand{\sgt}{S_g\ttuple}
\newcommand{\clljk}{c_1(L^l_{jk})}
\newcommand{\cllij}{c_1(L^l_{ij})}
\newcommand{\cllik}{c_1(L^l_{ik})}

\newcommand{\clqij}{c_1(L^q_{ij})}

\newcommand{\cqij}{c^q_{ij}}
\newcommand{\cqjk}{c^q_{jk}}
\newcommand{\cqik}{c^q_{ik}}
\newcommand{\sigpunc}{\Sigma \setminus \{ \points \}}
\newcommand{\aas}{a_1, \ldots, a_g}
\newcommand{\bbs}{b_1, \ldots, b_g}
\newcommand{\sljk}{s^l_{jk}}
\newcommand{\slik}{s^l_{ik}}
\newcommand{\slij}{s^l_{ij}}
\newcommand{\sljjk}{s^l_{j,k}}
\newcommand{\sliij}{s^l_{i,j}}
\newcommand{\sliik}{s^l_{i,k}}
\newcommand{\sljji}{s^l_{j,i}}
\newcommand{\slkki}{s^l_{k,i}}
\newcommand{\sljkx}{\sljk(x)}
\newcommand{\slijx}{\slij(x)}
\newcommand{\slikx}{\slik(x)}
\newcommand{\ccj}{\ccc_{(j)}}
\newcommand{\llj}{L^l_j}
\newcommand{\lli}{L^l_i}
\newcommand{\llk}{L^l_k}
\newcommand{\cllj}{c_1(\llj)}
\newcommand{\clli}{c_1(\lli)}
\newcommand{\cllk}{c_1(\llk)}
\newcommand{\rcm}{\rho(c_m)}
\newcommand{\rcl}{\rho(c_l)}
\newcommand{\rcq}{\rho(c_q)}
\newcommand{\rhx}{\rho(x)}
\newcommand{\sgq}{\sigma_q}
\newcommand{\sgl}{\sigma_l}
\newcommand{\sqq}{s^q}
\newcommand{\sll}{s^l}
\title{Moduli spaces of rank 3 parabolic bundles over a many-punctured surface}
\author{Elisheva Adina Gamse\thanks{Partially supported by NSERC grant PDF-488168} \\Department of Mathematics, University of Toronto } 
\date{18 March 2019}
\begin{document}
\maketitle

\begin{abstract} Let $M$ be the moduli space of rank 3 parabolic vector bundles over a Riemann surface with several punctures. By the Mehta-Seshadri correspondence, this is the space of rank 3 unitary representations of the fundamental group of the punctured surface with specified conjugacy classes of the images of each boundary component, up to conjugation by elements of the unitary group. For each puncture we consider the torus bundle on $M$ consisting of those representations where the image of the corresponding boundary component is a fixed element of the torus. We associate line bundles to these torus bundles via one-dimensional torus representations, and consider the subring of the cohomology ring of $M$ generated by their first Chern classes. By finding explicit sections of these line bundles with no common zeros we give a geometric proof that particular products of these Chern classes vanish. We prove that the ring generated by these Chern classes vanishes below the dimension of the moduli space, in a generalisation of a conjecture of Newstead. 
\end{abstract}
\section{Introduction}
Parabolic vector bundles were introduced by Mehta and Seshadri in the 1970s (see \cite{sesh} and \cite{mspara}) to generalise the Narasimhan-Seshadri correspondence (\cite{nsstableunitary}) between stable vector bundles over a compact Riemann surface and irreducible unitary representations of its fundamental group to the non-compact case. 

Recall that the {\em slope} of a vector bundle is its degree divided by its rank. A vector bundle $E$ is said to be {\em stable} if for all subbundles $D < E$, we have $slope(D) < slope(E)$; this stability condition was introduced by Mumford in \cite{mumford} as restricting attention to stable bundles makes the moduli spaces well-behaved. A {\em parabolic} vector bundle over $\Sigma$ is a stable vector bundle over $\Sigma$ together with a choice of flag in the fibre above each marked point (or each cusp, in the non-compact case considered by Mehta-Seshadri). 

Building on the results of Atiyah and Bott for moduli spaces of vector bundles \cite{ab}, Nitsure \cite{nitsure} computed the Betti numbers of the cohomology of the moduli space of parabolic bundles, and showed that when the rank is coprime to the degree of the bundles, the cohomology is torsion-free. Earl and Kirwan \cite{ek2} provided a complete set of relations between the generators of $H^*$ listed by Atiyah and Bott. 

In \cite{jw}, Weitsman focused on the case of rank 2 parabolic bundles over Riemann surfaces $\Sigma$ of genus $g \geq 2$. He considered tautological bundles $L \to \Sigma$, and provided a topological proof that 
\begin{equation} \label{rank2}
c_1(L)^{2g}=0
\end{equation}
as conjectured by Newstead in \cite{newstead}. In \cite{aj} we gave a generalisation of \eqref{rank2} to moduli spaces of parabolic vector bundles of rank $n$ over Riemann surfaces with only one marked point. The purpose of this paper is to prove a generalisation of \eqref{rank2} to moduli spaces of rank 3 parabolic vector bundles over Riemann surfaces with finitely many marked points. 

Specifically, let $g \geq 2$ and let $\Sigma$ be a Riemann surface of genus $g$. Let $p_1, \ldots, p_n$ be points on $\Sigma$. We fix a presentation 
\begin{equation*}
\pi_1(\Sigma \setminus \{ \points \}) = \left\langle a_1, \ldots, a_g, b_1, \ldots, b_g, \loops \mid \prod_{i=1}^g [a_i, b_i] = \prod_{j=1}^n c_j \right\rangle
\end{equation*}
of the fundamental group of the $n$-punctured surface $\sigpunc$. 
Let $G$ be $SU(3)$ and choose the maximal torus $T$ consisting of diagonal matrices. Let $(t_1, \ldots, t_n)$ be an $n$-tuple of elements of $T$ that is {\em generic} (in a sense that will be made precise in Definition \ref{generic}). Then the moduli space of rank 3 parabolic vector bundles over $\Sigma$ is given by 
\begin{equation*}
S_g(t_1, \ldots, t_n) := \{ \rho \in \Hom(\pi_1(\Sigma \setminus \{\points\}), G) \mid \rho(c_i) \sim t_i \forall i\} / G,
\end{equation*}
where $\sim$ denotes conjugacy in $G$, and the quotient is by the conjugation action. For each $1 \leq l \leq n$, define the torus bundle
\begin{equation*}
\vlg := \{ \rho \in \Hom(\pi_1(\Sigma \setminus \{ \points \}), G) \mid \rho(c_l)=t_l \text{ and }\rho(c_i) \sim t_i \forall i\}.
\end{equation*}
Let $\cjk$ denote the torus representation 
\begin{align*}
T \times \ccc & \to \ccc \\
\left(\left(\begin{array}{ccc} e^{i \theta_1} && \\& e^{i\theta_2}&\\&& e^{i\theta_3} \end{array}\right), z\right) &\mapsto e^{i(\theta_j - \theta_k)}z.
\end{align*} 
For each $1 \leq l \leq n$, and each $1 \leq j,k \leq 3$, let $L^l_{jk}$ be the line bundle associated to $\vlg$ via the representation $\cjk$, and consider its first Chern class $c_1(L^l_{jk})$. (Observe that when $j=k$ we get a trivial bundle.) In this paper we prove the following.

\begin{thm} \label{main}
The product 
\begin{equation*}
\prod_{\substack{1 \leq l \leq n \\ 1 \leq j,k \leq 3 \\ j \neq k}} c_1(\lljk)^{d^l_{jk}}
\end{equation*}
vanishes in $\hstar$ whenever  
\begin{equation*}
\sum_{\substack{1 \leq l \leq n \\ 1 \leq j,k \leq 3 \\ j \neq k}} d^l_{jk} \geq 6g + 4n -5.
\end{equation*}
\end{thm}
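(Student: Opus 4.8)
\medskip
\noindent\textbf{Proof strategy.}
The plan is to realise the product as an Euler class and then exhibit a nowhere-vanishing section. Recall that for line bundles $L_1,\dots,L_N$ on a manifold one has $\prod_{i=1}^N c_1(L_i)=e\!\left(\bigoplus_{i=1}^N L_i\right)$, and that the Euler class of a bundle admitting a nowhere-zero global section vanishes. Writing the product of Theorem~\ref{main} as $\prod_i c_1(L_i)$, where each factor $c_1(\lljk)$ is repeated $d^l_{jk}$ times, it therefore suffices to produce, for every factor, a global section $s_i$ of the corresponding $\lljk$ so that $\bigcap_i s_i^{-1}(0)=\varnothing$ in $\sgt$. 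By monotonicity it is enough to treat the case in which the exponent-sum equals the threshold $6g+4n-5$: any product with larger exponent-sum is a multiple of one with exponent-sum exactly the threshold, and so vanishes once the latter does.

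Next I would write the sections down explicitly. For a word $x\in\pi_1(\sigpunc)$ and a representative $\rho\in\vlg$ (so $\rho(c_l)=t_l$ is diagonal, with standard basis $e_1,e_2,e_3$ its eigenvectors), the matrix coefficient $\rho\mapsto\rho(x)_{jk}$ transforms under conjugation by the residual torus exactly by the character defining $\cjk$, hence descends to a global section $s^l_{jk}(x)$ of $\lljk$ over $\sgt$. Its zero locus is $\{[\rho]:\langle e_j,\rho(x)e_k\rangle=0\}$, the locus where $\rho(x)e_k$ is orthogonal to the $j$-th eigenline of $t_l$. The one structural input I would use is that, by the genericity hypothesis (Definition~\ref{generic}), every $\rho\in\sgt$ is irreducible; consequently, for each nonzero $v$ the orbit $\{\rho(x)v:x\in\pi_1\}$ spans $\mathbb{C}^3$, since its span is a $\rho$-invariant subspace.

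The heart of the argument is then to choose the words so that the imposed orthogonality conditions are \emph{incompatible} with irreducibility, forcing $\bigcap_i s_i^{-1}(0)=\varnothing$. The basic mechanism is that the stabiliser in $G$ of the eigenline $\mathbb{C}e_k$ is closed under multiplication: imposing $\rho(x)e_k\in\mathbb{C}e_k$ (two coefficient conditions, of types $(j_1,k)$ and $(j_2,k)$) for every element $x$ of a generating set forces $\mathbb{C}e_k$ to be $\rho$-invariant, a contradiction. Since $\pi_1(\sigpunc)$ is free of rank $2g+n-1$, since $\rho(c_l)=t_l$ automatically preserves every eigenline, and since the defining relation lets one omit a further generator, pinning a single invariant line this way costs $2(2g+n-2)$ conditions concentrated in one column at one puncture. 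The threshold $6g+4n-5$ is what survives after optimising this bookkeeping over \emph{all} admissible distributions of the exponents $d^l_{jk}$ among the six off-diagonal types and the $n$ punctures, and over the choice of which eigenline or eigenplane (column or row) to pin.

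The main obstacle is precisely this last, combinatorial-geometric, step, and it is where the full strength of the spanning statement is needed. An adversarial distribution can place all of its weight on a ``derangement'' of types, for instance $(2,1),(3,2),(1,3)$ at a single puncture, for which no column and no row is ever fully supplied and the naive invariant-line mechanism never triggers. To handle such cases I would replace ``pin an invariant line'' by the sharper assertion that a sufficiently rich finite family of words $x$ makes $\{\rho(x)e_k\}$ span $\mathbb{C}^3$ \emph{uniformly} over the compact space $\sgt$, so that the corresponding coefficients cannot all vanish; quantifying ``sufficiently rich'' uniformly, interlocking the shared genus generators $a_1,\dots,b_g$ with the $n$ distinct eigenframes, and verifying that the resulting count never exceeds $6g+4n-5$ for any distribution, is the delicate part. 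The remaining points --- that the $s^l_{jk}(x)$ are genuine smooth sections and that ``no common zero'' is checked on all of $\sgt$ rather than only generically --- are then routine.
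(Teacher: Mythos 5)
Your geometric mechanism is the same as the paper's (matrix-coefficient sections of $\lljk$, whose simultaneous vanishing forces every $\rho(x)$ into block-diagonal form, which genericity then rules out via the relation $\prod_i[\rho(a_i),\rho(b_i)]=\prod_j\rho(c_j)$), but your outline has a genuine gap at exactly the step you flag as ``the delicate part'', and the repair you sketch is not the right one. The section argument can only ever handle monomials whose factors at each puncture $q$ are concentrated in a single row, i.e.\ products of the form $\prod_q\bigl(c_1(L^q_{\sigma_q(1)\sigma_q(2)})\,c_1(L^q_{\sigma_q(1)\sigma_q(3)})\bigr)^{r_q}$: only then does the vanishing of all the chosen sections pin a common invariant line and trigger the contradiction. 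For an adversarial ``derangement'' monomial such as $(c^q_{21})^a(c^q_{32})^b(c^q_{13})^c$ the mechanism never fires, as you yourself observe; and your proposed fix --- a uniform spanning statement over the compact space $\sgt$ --- does not produce what Lemma \ref{topology} actually requires, namely sections of the \emph{specific} direct sum $\bigoplus_i L_i$ dictated by the given exponents $d^l_{jk}$, one summand per factor, with empty common zero locus. The missing idea is that this case is handled \emph{algebraically}, not geometrically: since $\cllij=-c_1(L^l_{ji})$ and $\cllij+\clljk=\cllik$ hold in $\hstar$, any single-puncture monomial of degree at least $3d-1$ can be rewritten (substitute $c^q_{12}=-c^q_{31}-c^q_{23}$ and expand binomially, as in the paper's Lemma \ref{comb}) as a polynomial combination of the three row-pair monomials $(c^q_{12}c^q_{13})^d$, $(c^q_{23}c^q_{21})^d$, $(c^q_{31}c^q_{32})^d$. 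The threshold $6g+4n-5$ is precisely what forces, in this rewriting, $\sum_q r_q\geq 2g+n-1$, so that only row-pair monomials ever need a geometric proof (Proposition \ref{zetavanishes}). Without some such cohomological reduction your argument cannot close, because you are demanding a nowhere-vanishing section for every monomial, including ones for which none of your constructions supply it.

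Two further points. First, your count is off by one: $\pi_1(\sigpunc)$ is free on $\{\aas,\bbs\}\cup\{c_j \mid j\neq l\}$, and $c_l$ is omitted only \emph{once} --- it is simultaneously the generator eliminated by the relation and the one with $\rho(c_l)=t_l$ --- so pinning an invariant line costs $2(2g+n-1)$ conditions, matching the exponent $2g+n-1$ in the paper's Lemma \ref{basic}, not $2(2g+n-2)$. Second, even in the row-pair case your sketch ignores the interaction between punctures: the classes $\clljk$ for different $l$ are associated to \emph{different} torus bundles $\vlg$, and comparing zero loci of their sections on the common base $\sgt$ is nontrivial; the paper needs the eigenvector sections $s^l_{j,k}(c_i)$ of the auxiliary bundles $L^l_j$ together with Lemmas \ref{transpose} and \ref{switch} to transfer conditions from one puncture to another. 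Calling this part ``routine'' underestimates where a substantial share of the work lies.
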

\begin{rmk}
The dimension of $\sgt$ is $(2g+n)\dim G - \dim G  - n \dim T - \dim G = 16g + 6n -16$. Thus Theorem \ref{main} tells us that the ring generated by the $\clljk$ vanishes below the top cohomology of the moduli space whenever $n < 2g-3$. 
\end{rmk}

As in \cite{aj}, the method of proof of Theorem \ref{main} is to exhibit certain collections of sections of the $\lljk$ with no common zeros, enabling us to conclude that the corresponding products of Chern classes vanish in $\hstar$. Having identified such collections, we will give a combinatorial argument which uses the relations between the line bundles to show that any monomial in the $\clljk$ of sufficiently high degree is equivalent in $\hstar$ to some combination of those particular monomials already shown to vanish, and hence also vanishes. 

We will use the following throughout the paper. 
\begin{notn} If $m$ is a positive integer we will denote by $[m]$ the set $\{1, \ldots, m\}$. 
\end{notn}
\begin{defn} \label{generic} The $n$-tuple $\ttuple$ of elements $t_j \in T$ is said to be {\em generic} if 
\begin{enumerate} \item $\Stab(t_j) = T \quad \forall j \in [n]$, and \item If $\lambda_i$ is an eigenvalue of $t_i$ for each $1 \leq i \leq n$, then $\lambda_1 \cdots \lambda_n \neq 1$. \end{enumerate}
\end{defn}

\section{Identifying particular monomials which vanish}

Recall the following useful lemma:

\begin{lem} \label{topology} Suppose $\mathcal{L}_1, \ldots, \mathcal{L}_k \to M$ are complex line bundles over a manifold $M$, and $s_i: M \to \mathcal{L}_i$ are sections such that $\cap_{i=1}^k s^{-1}(0) = \varnothing$. Then $c_1(\mathcal{L}_1) \cdots c_1(\mathcal{L}_k) = 0$. 
\end{lem}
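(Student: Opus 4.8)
The plan is to reinterpret the product of first Chern classes as a single Euler class and then exploit the fact that the sections assemble into a nowhere-vanishing section of the direct sum. First I would recall that a complex line bundle $\mathcal{L}_i$, regarded as an oriented real rank-$2$ bundle, has Euler class $e(\mathcal{L}_i)$ equal to its first Chern class $c_1(\mathcal{L}_i)$. Next I would form the Whitney sum $\mathcal{E} := \mathcal{L}_1 \oplus \cdots \oplus \mathcal{L}_k$, an oriented real bundle of rank $2k$, and invoke the multiplicativity of the Euler class under direct sums to write
\begin{equation*}
e(\mathcal{E}) = e(\mathcal{L}_1) \cdots e(\mathcal{L}_k) = c_1(\mathcal{L}_1) \cdots c_1(\mathcal{L}_k).
\end{equation*}

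The crux is then to produce a nowhere-zero section of $\mathcal{E}$. The sections $s_1, \ldots, s_k$ combine into a section $s := (s_1, \ldots, s_k)$ of $\mathcal{E}$, and $s(x) = 0$ if and only if $s_i(x) = 0$ for every $i$, that is, if and only if $x$ lies in the common zero locus $\cap_{i=1}^k s_i^{-1}(0)$. By hypothesis this intersection is empty, so $s$ is nowhere zero. A standard obstruction-theoretic fact, namely that the Euler class of an oriented real vector bundle vanishes whenever the bundle admits a nowhere-zero section, then gives $e(\mathcal{E}) = 0$, and combining this with the displayed identity yields $c_1(\mathcal{L}_1) \cdots c_1(\mathcal{L}_k) = 0$.

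I expect the only point requiring care to be the bookkeeping around the Euler-class formalism: confirming the orientation conventions so that $e(\mathcal{L}_i) = c_1(\mathcal{L}_i)$ rather than its negative, and citing the product formula for Euler classes of oriented bundles in precisely the form needed. An alternative route, should one prefer to avoid Euler classes, is the Poincar\'e-dual picture: after perturbing each $s_i$ to be transverse to the zero section, $c_1(\mathcal{L}_i)$ is Poincar\'e dual to the cycle $[s_i^{-1}(0)]$, and the cup product of the $c_1(\mathcal{L}_i)$ is dual to the transverse intersection $\cap_i s_i^{-1}(0)$; emptiness of this intersection forces the product to vanish. The Euler-class argument is cleaner, since it sidesteps the transversality perturbations, so that is the one I would present.
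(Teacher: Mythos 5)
Your proposal is correct and is essentially the paper's own argument: the paper likewise forms the Whitney sum $E = \mathcal{L}_1 \oplus \cdots \oplus \mathcal{L}_k$, observes that $(s_1, \ldots, s_k)$ is a nowhere-zero section so $e(E) = 0$, and concludes via $e(E) = c_k(E) = c_1(\mathcal{L}_1) \cdots c_1(\mathcal{L}_k)$. The only cosmetic difference is that you invoke multiplicativity of the Euler class directly, while the paper routes through the top Chern class and the Whitney formula; these are the same computation.
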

\begin{proof} Consider the vector bundle $E:= \mathcal{L}_1 \oplus \cdots \oplus \mathcal{L}_k$. The section $\sigma: M \to E$ given by $(s_1, \ldots, s_k)$ is nowhere zero, so $e(E)=0$. Thus $0=c_k(E) = c_1(\mathcal{L}_1) \cdots c_1(\mathcal{L}_k)$. 
\end{proof}

Our approach to proving Theorem \ref{main} is to identify particular sections of the $\lljk$ which have no common zeros, thus deducing the vanishing of the corresponding monomials in the $\clljk$ by Lemma \ref{topology}.  Recall that a $T$-equivariant map $f: V \to \ccc_{(\alpha)}$ from a torus bundle $V$ to a 
$T$-representation $\ccc_{(\alpha)}$ induces a section $\tilde{f}: V/T \to (V \times \ccc_{(\alpha)})/T$ of the associated line bundle (where the latter quotient is by the diagonal $T$-action).  

\begin{defn} Let $1 \leq l \leq n$. For each generator $x \in \{\aas, \bbs, c_1, \ldots, \hat{c_l}, \ldots, c_n \} $ of $\pi_1(\sigpunc)$ other than $c_l$, let $\sljkx$ be the section of $\lljk$ induced by the $T$-equivariant map 
\begin{align*}
\vlg &\to \cjk \\
\rho &\mapsto (\rho(x))_{jk},
\end{align*}
where the subscript denotes the $(j,k)^{th}$ matrix entry.
\end{defn}
The following lemma is a special case that will illustrate our use of these sections. 
\begin{lem} \label{basic} The monomial $\left(c_1(L^1_{12})c_1(L^1_{13})\right)^{2g+n-1}$ vanishes in $\hstar$. 
\end{lem}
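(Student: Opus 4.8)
The plan is to apply Lemma~\ref{topology} to the collection consisting of the sections $s^1_{12}(x)$ together with the sections $s^1_{13}(x)$, as $x$ ranges over all generators of $\pi_1(\sigpunc)$ other than $c_1$, namely $\{\aas, \bbs, c_2, \ldots, c_n\}$. There are exactly $2g+n-1$ such generators, so this furnishes $2g+n-1$ sections of $L^1_{12}$ and $2g+n-1$ sections of $L^1_{13}$. If this family of $2(2g+n-1)$ sections has empty common zero locus, then Lemma~\ref{topology} immediately gives $c_1(L^1_{12})^{2g+n-1}\,c_1(L^1_{13})^{2g+n-1}=0$, which is precisely the claim. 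Thus the entire task is to show that these sections have no common zero.

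First I would suppose, for contradiction, that they do have a common zero in $\sgt$; since the defining matrix entries vanish $T$-invariantly there, I may lift it to a representation $\rho \in V^1_g\ttuple$ (which by definition has $\rho(c_1)=t_1$) satisfying $(\rho(x))_{12}=(\rho(x))_{13}=0$ for every generator $x \neq c_1$. The key geometric observation is that the vanishing of these two off-diagonal entries makes the first row of each $\rho(x)$ equal to $((\rho(x))_{11},0,0)$, which forces $\rho(x)$ to preserve the plane $W:=\langle e_2, e_3\rangle$. Since $\rho(c_1)=t_1$ is diagonal it too preserves $W$, so the whole image $\rho(\pi_1(\sigpunc))$ preserves $W$ and $\rho$ is reducible.

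Next I would pass to the induced action $\bar\rho$ on the line $\ccc^3/W$, on which $\rho(x)$ acts by multiplication by its top-left entry $(\rho(x))_{11}$; this defines a character $\bar\rho:\pi_1(\sigpunc)\to\ccc^*$. Feeding the surface relation $\prod_{i=1}^g[a_i,b_i]=\prod_{j=1}^n c_j$ through $\bar\rho$ and using that a product of commutators is trivial in the abelian group $\ccc^*$, I would obtain $\prod_{j=1}^n \bar\rho(c_j)=1$.

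The step I expect to require the most care is identifying each factor $\bar\rho(c_j)=(\rho(c_j))_{11}$ as an eigenvalue of $t_j$. For $j=1$ this is clear, as $(\rho(c_1))_{11}$ is a diagonal entry of $t_1$. For $j\neq 1$ the matrix $\rho(c_j)$ is block lower-triangular for the splitting $\ccc e_1\oplus W$, so its characteristic polynomial factors and $(\rho(c_j))_{11}$ is genuinely an eigenvalue of $\rho(c_j)$; since $\rho(c_j)\sim t_j$, its eigenvalues are exactly those of $t_j$. Hence $\prod_{j=1}^n\bar\rho(c_j)$ is a product of eigenvalues, one drawn from each $t_j$, that equals $1$, directly contradicting part (2) of the genericity hypothesis in Definition~\ref{generic}. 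This contradiction shows the common zero locus is empty, and the lemma then follows from Lemma~\ref{topology}.
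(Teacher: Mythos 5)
Your proposal is correct and follows essentially the same approach as the paper: the identical collection of sections $\{s^1_{12}(x), s^1_{13}(x)\}$ over the $2g+n-1$ generators other than $c_1$, reduction to Lemma~\ref{topology}, and a contradiction with part (2) of Definition~\ref{generic} via the surface relation. The only difference is cosmetic: you extract the eigenvalue relation through the induced character on the quotient line $\ccc^3/W$ (using block lower-triangularity), whereas the paper uses unitarity of $SU(3)$ to put each $\rho(x)$ in full block-diagonal form and reads off the relation from the $(1,1)$ entries directly.
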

\begin{proof}Consider the collection of sections 
\begin{equation} \label{inlemsecs} \left\{s^1_{12}(x), s^1_{13}(x) \mid x \in \{ \aas, \bbs, c_2, \ldots, c_n \}\right\}.
\end{equation} These sections all vanish on those $\rho \in V^1_g\ttuple $ where the images $\rho(x) \in SU(3)$ are of the form $\left( \begin{array}{ccc}  e^{i\phi} & 0 & 0 \\ 0 & z & w  \\ 0 & -\bar{w} & \bar{z} \end{array} \right)$ for every $x \in \{ \aas, \bbs, c_2, \ldots, c_n \}$. (Recall that $\rho(c_1) = t_1$ for $\rho \in V^1_g\ttuple$.) Hence for such $\rho$, 
\begin{equation} \label{inlem}
\prod_{j=1}^n \rho(c_j) = \prod_{i=1}^g [\rho(a_i), \rho(b_i)] = \left( \begin{array}{ccc}  1 & 0 & 0 \\ 0 & \zeta & \eta  \\ 0 & -\bar{\eta} & \bar{\zeta} \end{array} \right)
\end{equation} for some $\zeta, \eta \in \ccc$ with $\vert \zeta \vert ^2 + \vert \eta \vert ^2 = 1$. Notice also that if $\rho(c_j) = \left( \begin{array}{ccc}  e^{i\phi} & 0 & 0 \\ 0 & z & w  \\ 0 & -\bar{w} & \bar{z} \end{array} \right)$, then $e^{i\phi}$ is an eigenvalue $\lambda_j$ of $t_j$, since $\rho(c_j) \sim t_j$. Thus \eqref{inlem} also gives us a relation $\lambda_1 \cdots \lambda_n=1$ between eigenvalues $\lambda_j$ of the $t_j$. Since we picked $\ttuple$ to be generic (as in Definition \ref{generic}), there are no $\rho$ in $V^1_g\ttuple$ where such a relation holds. Thus the locus on which these $4g+2n-2$ sections \eqref{inlemsecs} all vanish is empty, and $(c_1(L^1_{12})c_1(L^1_{13}))^{2g+n-1}=0$ as claimed.\end{proof}

Observe that the Chern classes appearing in Lemma \ref{basic} were Chern classes of line bundles all associated to the same torus bundle $V^1_g\ttuple$. In order to work with products of Chern classes of line bundles associated to different torus bundles, it will be convenient to consider sections of the following line bundles (whose first Chern classes we will later express in terms of the $\clljk$). 

For $1 \leq j \leq 3$, let $\ccc_{(j)}$ denote the torus representation 
\begin{align*}
T \times \ccc & \to \ccc \\
\left(\left(\begin{array}{ccc} e^{i \theta_1} && \\& e^{i\theta_2}&\\&& e^{i\theta_3} \end{array}\right), z\right) &\mapsto e^{i\theta_j}z,
\end{align*} 
and let $L^l_j := (\vlg \times \ccj)/T$ be the line bundle associated to $\vlg$ via this representation. 
\begin{defn} For each $1 \leq l \leq n$, $1 \leq j, k \leq 3$, and $i \in [n] \setminus \{l\}$, we define a section $s^l{j,k}(c_i): \sgt \to L^l_j$ as follows. Given a class in $\sgt$, pick a representative $\rho \in \vlg$, observing that $\rho(c_l)=t_l$ and $\rho(c_i) \sim t_i$. That is, there is some $A \in SU(3)$ such that $A \rho(c_i) A^{-1} = t_i$, and the columns of $A^{-1}=A^*$ are eigenvectors of $\rho(c_i)$. Since our choice of $\ttuple$ was generic, the eigenvalues of $\rho(c_i)$ are distinct, and $A$ is determined up to left multiplication by elements of the torus.  We declare the image of $[\rho]$ under $\sljjk(c_i)$ to be $[(A \cdot \rho, A_{jk})] \in \llj$. This is well-defined, since $[(A\cdot \rho, A_{jk})] = [((tA)\cdot \rho, (tA)_{jk})]$ in $(\vlg \times \ccj)/T$. 

Observe that $A_{jk}=0 \iff \overline{A_{kj}} = (A^{-1})_{kj} = 0$, and so this section vanishes on $\rho$ if and only if the $k^{th}$ entry in any eigenvector of $\rho(c_i)$ with eigenvalue $(t_i)_{jj}$ is zero.
\end{defn}
We now collect some results concerning collections of sections which have the same zero locus in $\sgt$. 

\begin{lem} \label{transpose}Let $l$ and $q$ be distinct elements of $[n]$ and let $(i,j,k)$ be a permutation of $(1,2,3)$. An element $[\rho]$ in $\sgt$ is in the vanishing locus of $\sliij(c_q)$ and of $\sliik(c_q)$ if and only if it is in the vanishing locus of $\sljji(c_q)$ and of $\slkki(c_q)$. 
\end{lem}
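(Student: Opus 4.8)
The plan is to translate each of the four vanishing conditions into the vanishing of a single matrix entry, and then to reduce the whole statement to an elementary fact about unitary matrices. First I would fix a class $[\rho] \in \sgt$, a representative $\rho \in \vlg$, and a matrix $A \in SU(3)$ with $A \rcq A^{-1} = t_q$. This $A$ is common to all four sections, since each is built from $c_q$, and it is determined only up to left multiplication by $T$. Because such a left multiplication rescales each row of $A$ by a scalar of modulus $1$, the vanishing of any fixed entry $A_{jk}$ is unaffected by this ambiguity. Invoking the observation recorded at the end of the preceding definition, I would then read off the dictionary: $[\rho]$ lies in the zero locus of $\sliij(c_q)$ exactly when $A_{ij}=0$, in that of $\sliik(c_q)$ exactly when $A_{ik}=0$, in that of $\sljji(c_q)$ exactly when $A_{ji}=0$, and in that of $\slkki(c_q)$ exactly when $A_{ki}=0$.

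With this dictionary in hand, the lemma becomes the following purely linear-algebraic claim: for a $3\times 3$ unitary matrix $A$ and a permutation $(i,j,k)$ of $(1,2,3)$, one has $A_{ij}=A_{ik}=0$ if and only if $A_{ji}=A_{ki}=0$. To prove this I would show that each of the two conditions is separately equivalent to $|A_{ii}|=1$. Indeed, $A_{ij}=A_{ik}=0$ says that the only possibly nonzero entry in row $i$ is $A_{ii}$; since the rows of a unitary matrix have unit norm, this is equivalent to $|A_{ii}|=1$. Symmetrically, $A_{ji}=A_{ki}=0$ says that the only possibly nonzero entry in column $i$ is $A_{ii}$, which, the columns also having unit norm, is again equivalent to $|A_{ii}|=1$. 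Chaining the two equivalences through $|A_{ii}|=1$ yields the claim, and hence the lemma.

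The linear algebra here is immediate, so I expect the only point requiring genuine care to be the first paragraph: the passage from the section $\sliij(c_q)$ to ``the entry $A_{ij}$ vanishes'' must be checked to be well defined on $\sgt$, i.e. independent both of the representative $\rho$ chosen for $[\rho]$ and of the torus ambiguity in $A$. The ambiguity in $A$ is handled by the modulus-$1$ rescaling noted above, and changing the representative within $\vlg$ replaces $A$ by $A g^{-1}$ for some $g \in \Stab(t_l) = T$, which again only rescales entries by unit scalars; either way the vanishing of $A_{ij}$ is preserved. Once this bookkeeping is settled, the four vanishing loci are each cut out by a single entry of one common unitary matrix, and the asserted equivalence is exactly the row/column unit-norm argument above.
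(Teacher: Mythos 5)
Your proposal is correct and takes essentially the same approach as the paper: the paper's (two-sentence) proof is precisely the observation that the conjugating matrix lies in $SU(3)$ and that for such a matrix the vanishing of the off-diagonal entries of row $i$ is equivalent to the vanishing of the off-diagonal entries of column $i$. Your write-up simply makes explicit what the paper leaves implicit --- the dictionary between section vanishing and matrix-entry vanishing (including the torus and representative ambiguities) and the unit-norm row/column argument via $|A_{ii}|=1$.
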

\begin{proof}
Suppose $tA\rho(c_q)A^{-1}t^{-1}=t_q$. Observe that since $tA \in SU(3)$, we know $(tA)_{ij}=(tA)_{ik}=0$ if and only if $(tA)_{ji}=(tA)_{ki}=0$. 
\end{proof}
\begin{lem} \label{switch}
Let $l \neq m \in [n]$, let $A$ be a subset of $\{\aas, \bbs, c_j \mid j \in [n] \setminus \{l,m\}\}$, and let $(i,j,k)$ and $(u,v,w)$ be permutations of $(1,2,3)$. Then the collection of sections
\begin{equation} \label{coll1}
\{ \slij(x), \slik(x) \mid x \in A \} \cup \{s^l_{u,j}(c_m), s^l_{u,k}(c_m)\}
\end{equation} has the same vanishing locus in $\sgt$ as the collection
\begin{equation} \label{coll2}
\{s^m_{uv}(x), s^m_{uw}(x) \mid x \in A \} \cup \{s^m_{i,v}(c_l), s^m_{i,w}(c_l)\}
\end{equation} does. 
\end{lem}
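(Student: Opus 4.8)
The plan is to show that each of the two collections \eqref{coll1} and \eqref{coll2} vanishes on a class $[\rho] \in \sgt$ precisely when a single, manifestly conjugation-invariant geometric condition on $\rho$ holds; equality of the two vanishing loci then follows at once. First I would translate every section's vanishing into a statement about eigenvectors, working with a representative $\rho \in \vlg$ (so $\rho(c_l)=t_l$) for \eqref{coll1}. The pair $\slij(x),\slik(x)$ vanishes exactly when the $(i,j)$ and $(i,k)$ entries of $\rho(x)$ are zero; since $(i,j,k)$ is a permutation of $(1,2,3)$ and $\rho(x)$ is unitary, row $i$ being diagonal forces column $i$ to be diagonal as well, so $\rho(x)$ is block-diagonal with respect to $\ccc e_i \oplus \mathrm{span}(e_j,e_k)$, i.e. $e_i$ is an eigenvector of $\rho(x)$. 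Letting $x$ range over $A$, the first half of \eqref{coll1} vanishes iff $\ccc e_i$ is a common eigenline of $\{\rho(x):x\in A\}$. For the pair $s^l_{u,j}(c_m), s^l_{u,k}(c_m)$ I would use the defining description of these sections: they vanish iff the $j$-th and $k$-th entries of the eigenvector of $\rho(c_m)$ with eigenvalue $(t_m)_{uu}$ both vanish, which again (as $(i,j,k)$ is a permutation) says exactly that this eigenvector is a multiple of $e_i$, i.e. $e_i$ is an eigenvector of $\rho(c_m)$ with eigenvalue $(t_m)_{uu}$.

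Next I would invoke genericity, Definition \ref{generic}(1), which makes each $t_j$ regular, so that $\rho(c_l)\sim t_l$ and $\rho(c_m)\sim t_m$ have distinct eigenvalues and the eigenline for a prescribed eigenvalue is \emph{unique}. Writing $\ell_l$ for the eigenline of $\rho(c_l)$ with eigenvalue $(t_l)_{ii}$ and $\ell_m$ for the eigenline of $\rho(c_m)$ with eigenvalue $(t_m)_{uu}$, and noting that $\rho(c_l)=t_l$ gives $\ell_l=\ccc e_i$, the analysis above shows that \eqref{coll1} vanishes on $[\rho]$ iff (I) $\ell_l=\ell_m$ and (II) this common line is a common eigenvector of $\{\rho(x):x\in A\}$. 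I would then run the identical computation for \eqref{coll2} using a representative $\rho'\in\vmg$ (so $\rho'(c_m)=t_m$ and $\ell_m=\ccc e_u$): the sections $s^m_{uv}(x),s^m_{uw}(x)$ force $e_u$ to be a common eigenvector of the $\rho'(x)$, while $s^m_{i,v}(c_l),s^m_{i,w}(c_l)$ force $e_u$ to be the eigenvector of $\rho'(c_l)$ with eigenvalue $(t_l)_{ii}$, i.e. $\ell_l=\ell_m=\ccc e_u$. Thus \eqref{coll2} vanishes iff (I) and (II) hold as well. Because conditions (I) and (II) refer only to eigenlines and to common eigenvectors, they transform equivariantly under conjugation and therefore cut out a subset of $\sgt$ independent of the choice of representative; hence the two collections have the same vanishing locus.

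The main obstacle is bookkeeping rather than conceptual. I must track the two index conventions with care -- the entry-type sections $\slij(x)$ versus the eigenvector-type sections $s^l_{j,k}(c_i)$, whose vanishing is governed by the identity $A_{jk}=0 \iff (A^{-1})_{kj}=0$ -- and verify that the permutation labels $(i,j,k)$ and $(u,v,w)$ interlock so that both collections yield the \emph{same} ordered data: the line $\ell_l$ attached to eigenvalue $(t_l)_{ii}$, the line $\ell_m$ attached to $(t_m)_{uu}$, and the common-eigenvector constraint over $A$. The genuinely delicate point is that \eqref{coll1} and \eqref{coll2} are built from sections of line bundles associated to \emph{different} torus bundles, $\vlg$ and $\vmg$, so the comparison cannot be carried out fibrewise; it must be done after descending to $\sgt$, which is precisely why recasting everything as the representative-independent conditions (I) and (II) is the crux of the argument. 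If one prefers to pass freely between row- and column-vanishing of the conjugating matrix while matching the two sides, Lemma \ref{transpose} may be cited for that step.
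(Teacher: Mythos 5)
Your proof is correct. Its first half is the same computation the paper performs: vanishing of $\slij(x),\slik(x)$ together with unitarity forces $\rho(x)$ to be block diagonal with respect to $\ccc e_i\oplus\mathrm{span}(e_j,e_k)$, and vanishing of $s^l_{u,j}(c_m),s^l_{u,k}(c_m)$ says exactly that the eigenvector of $\rho(c_m)$ with eigenvalue $(t_m)_{uu}$ is a multiple of $e_i$. Where you genuinely diverge is in how the two collections, which are built from bundles associated to the \emph{different} torus bundles $\vlg$ and $\vmg$, get compared. The paper takes a representative $\rho\in\vlg$ in the zero locus of \eqref{coll1}, explicitly constructs a conjugating element $\sigma_\rho h_\rho$ (a block-diagonal matrix followed by a permutation matrix) carrying $\rho$ into $\vmg$, verifies entry by entry that the new representative satisfies the conditions of \eqref{coll2} (in particular that $\sigma_\rho$ sends the $i$th basis vector to the $u$th), and concludes by symmetry. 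You instead characterise \emph{both} vanishing loci by a single conjugation-invariant condition: the eigenline of $\rho(c_l)$ for $(t_l)_{ii}$ coincides with the eigenline of $\rho(c_m)$ for $(t_m)_{uu}$, and this line is a common eigenline of the $\rho(x)$, $x\in A$. Since that condition is preserved under conjugation, it cuts out a well-defined subset of $\sgt$, and equality of the two loci is automatic, with no change of representative ever carried out. Your route is shorter, avoids the matrix bookkeeping, and makes explicit where genericity (Definition \ref{generic}, giving uniqueness of the relevant eigenlines) is used; the paper's route has the compensating merit of exhibiting concretely how a representative over $\vlg$ is transported to one over $\vmg$, i.e.\ why the index $i$ trades places with $u$. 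Both arguments are complete proofs of the lemma.
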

\begin{proof}
The vanishing locus of the collection \eqref{coll1} consists of those elements $[\rho] \in \sgt$ that have representatives $\rho \in \vlg$ such that $(\rho(x))_{ij}=(\rho(x))_{ik}=0$ for all $x \in A$, and that also satisfy the condition that the $j$ and $k$ coordinates of the eigenvector of $\rho(c_m)$ with eigenvalue $(t_m)_{uu}$ are both zero. Since the $\rho(x)$ lie in $SU(3)$, this means that $\rho(c_m)$ and each $\rhx$ for $x \in A$ are block diagonal (with zeros in the non-diagonal entries of the $i^{th}$ row and column), and further, that $(\rcm)_{ii}$ is the eigenvalue $(t_m)_{uu}$. Given $\rho \in \vlg$, let $h_\rho \in SU(3)$ be a block diagonal matrix with the same shape as $\rcm$ such that $h_\rho \cdot \rcm$ is diagonal, and let $\sigma_\rho$ be the permutation matrix such that $\sigma_\rho \cdot h_\rho \cdot \rcm = t_m$. Then $(\sigma_\rho h_\rho) \cdot \rho$ is a representative of $[\rho]$ that lies in $\vmg$. Observe that $\sigma_\rho$ must send $i^{th}$ basis vector to the $u^{th}$ basis vector, since the $u^{th}$ eigenvalue of $t_m$ is in the $(i,i)$ position in $\rcm$. Furthermore, for each $x\in A$, the matrix $(\sigma_\rho h_\rho) \cdot \rhx$ must be block diagonal with zeros in the non-diagonal entries of the $u^{th}$ row and column. Finally, since $\rho(c_l)=t_l$, we know that $(\sigma_\rho h_\rho) \cdot \rcl$ has $(u,u)$ entry $(t_l)_{ii}$, and zeros elsewhere in the $u^{th}$ row and column. Thus, classes $[\rho] \in \sgt$ that are in the vanishing locus of \eqref{coll1} are also in the vanishing locus of \eqref{coll2}; by symmetry, the two vanishing loci are equal. 
\end{proof}

\begin{lem} \label{nozeros} Let $l \in [n]$ and let $X$ and $Y$ be disjoint sets such that $X\sqcup Y = \{c_i \mid i \in [n] \setminus \{ l \}\}$, let $(i,j,k)$ be a permutation of $(1,2,3)$, and let $f: [n] \to [3]$ be a function. Then the collection of sections 
\begin{equation}
\left\{\slij(x), \slik(x) \mid x \in \{\aas, \bbs\} \cup X\right\} \cup \left\{ s^l_{f(q),j}(c_q), s^l_{f(q),k}(c_k) \mid q \in Y\right\}
\end{equation} has no common zeros.
\end{lem}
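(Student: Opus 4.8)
The plan is to argue by contradiction, in the spirit of the proof of Lemma~\ref{basic}, exploiting the surface relation together with the genericity of $\ttuple$. Suppose $[\rho] \in \sgt$ were a common zero of all the listed sections, and choose a representative $\rho \in \vlg$, so that $\rho(c_l) = t_l$ and $\rho(c_m) \sim t_m$ for every $m \in [n]$. I would work throughout with the orthogonal splitting $\ccc^3 = \mathrm{span}(e_i) \oplus \mathrm{span}(e_j, e_k)$ determined by the permutation $(i,j,k)$.

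The first step is to translate the vanishing of each section into a block-diagonal constraint with respect to this splitting. For $x \in \{\aas, \bbs\} \cup X$, the vanishing of $\slij(x)$ and $\slik(x)$ means that the $(i,j)$ and $(i,k)$ entries of $\rho(x)$ are zero; since $\rho(x) \in SU(3)$ has orthonormal rows and columns, this forces the $(i,i)$ entry to be a unit scalar and the remaining entries of the $i$th row and column to vanish, so that $\rho(x)$ preserves the splitting. For $c_q \in Y$, the vanishing of $s^l_{f(q),j}(c_q)$ and $s^l_{f(q),k}(c_q)$ means, by definition of these sections, that the $j$ and $k$ coordinates of the eigenvector of $\rho(c_q)$ with eigenvalue $(t_q)_{f(q),f(q)}$ both vanish; because $\Stab(t_q) = T$ forces the eigenvalues of $\rho(c_q) \sim t_q$ to be distinct, that eigenvector must be $e_i$, so $\rho(c_q)$ fixes $e_i$, again preserves the splitting, and has $(i,i)$ entry equal to the eigenvalue $(t_q)_{f(q),f(q)}$ of $t_q$. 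Together with $\rho(c_l) = t_l$, every generator of $\pi_1(\sigpunc)$ is therefore sent to a matrix preserving $\ccc^3 = \mathrm{span}(e_i) \oplus \mathrm{span}(e_j, e_k)$, and in each case the $(i,i)$ entry is an eigenvalue of the corresponding $t_m$.

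The decisive step is then to compare the $(i,i)$ entries on the two sides of $\prod_{m=1}^g [\rho(a_m), \rho(b_m)] = \prod_{m=1}^n \rho(c_m)$. Each commutator $[\rho(a_m), \rho(b_m)]$ is block diagonal, and its upper $1 \times 1$ block is a commutator of scalars, hence $1$; so the $(i,i)$ entry of the left-hand side is $1$. The right-hand side is a product of block-diagonal matrices, so its $(i,i)$ entry is $\prod_{m=1}^n (\rho(c_m))_{ii}$, which by the previous step is a product $\lambda_1 \cdots \lambda_n$ of eigenvalues $\lambda_m$ of the $t_m$. Equating the two gives $\lambda_1 \cdots \lambda_n = 1$, contradicting condition (2) of Definition~\ref{generic}. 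Hence no common zero exists.

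I expect the only delicate point to be the uniform block-diagonalisation in the second step: one has to treat the two genuinely different families of sections, namely the matrix-entry sections for the generators in $\{\aas, \bbs\} \cup X$ and the eigenvector-coordinate sections for the $c_q \in Y$, and verify that each produces the same invariant splitting and, crucially, that the surviving $(i,i)$ entry of each $\rho(c_m)$ is an eigenvalue of $t_m$. Once this is established, the collapse of the commutators and the appeal to genericity are immediate and exactly parallel to Lemma~\ref{basic}.
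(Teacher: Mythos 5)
Your proof is correct and takes essentially the same route as the paper: translate the vanishing of both families of sections into block-diagonality with respect to $\mathrm{span}(e_i)\oplus\mathrm{span}(e_j,e_k)$, then read off the $(i,i)$ entries in the surface relation to produce a forbidden eigenvalue relation $\lambda_1\cdots\lambda_n=1$. The only difference is cosmetic: the paper defers the final step to the argument of Lemma~\ref{basic}, whereas you spell it out (including the point that the $(i,i)$ entry of each block-diagonal $\rho(c_m)$ is an eigenvalue of $t_m$), and you also correctly read the statement's $s^l_{f(q),k}(c_k)$ as the typo it is for $s^l_{f(q),k}(c_q)$.
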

\begin{proof}
If $Y$ is empty then the proof is as in Lemma \ref{basic}. Suppose otherwise. If $[\rho]$ is in the vanishing locus of each of the sections in $\{s^l_{f(q),j}(c_q), s^l_{f(q),k}(c_q) \mid q \in Y \}$, then for each $q \in Y$, any eigenvector of $\rho(c_q)$ with eigenvalue $(t_q)_{f(q)f(q)}$ has zeros in the $j^{th}$ and $k^{th}$ coordinates. That is, the $i^{th}$ standard basis vector of $\ccc^3$ is an eigenvector of $\rcq$, and so $\rcq$ is block diagonal with blocks $\{i\}$ and $\{j,k\}$. If the sections $\left\{\slijx, \slikx \mid x \in \{\aas, \bbs\} \cup X\right\}$ also vanish on $[\rho]$, then each $\rhx$ is also blcok diagonal with the same structure. As in the proof of Lemma \ref{basic}, this would imply a relation between the eigenvalues of the $t_i$, contradicting our assumption that $\ttuple$ is generic; thus there are no $[\rho] \in \sgt$ on which these sections all vanish. 
\end{proof}

Given a torus bundle $V$ and line bundles $L_\phi, L_\psi$ associated to $V$ via torus representations $\phi, \psi$, observe that $L_\phi \otimes L_\psi \cong L_{\phi + \psi}$ and $L^*_\phi \cong L_{-\phi}$; hence $c_1(L_\phi) + c_1(L_\psi) = c_1(L_{\phi + \psi}$ and $c_1(L_{-\phi}) = -c_1(L_\phi)$. In particular, the Chern classes we consider in this paper obey the following relations:

\begin{equation} \label{relns}
\begin{aligned}
c_1(\llij) + \clljk &= \cllik \\
\cllij &= -c_1(L^l_{ji}) \\
\cllij &= \clli - \cllj \\
\cllij + \cllik &= 3\clli \\
\clli + \cllj + \cllk &= 0
\end{aligned}
\end{equation}
The following proposition uses the earlier results in this section together with the relations \eqref{relns} to exhibit a class of monomials in the $\cllij$ which vanish. 
\begin{prop} \label{zetavanishes} Let $r_1, \ldots, r_n$ be non-negative integers with $r_1 + \ldots + r_n = 2g + n - 1$. For each $1 \leq q \leq n$, let $\sigma_q$ be a permutation of $(1,2,3)$. Then the monomial 
\begin{equation} \label{zetaprop}
\zeta = \prod_{q=1}^n \left(c_1(L^q_{\sgq(1)\sgq(2)})c_1(L^q_{\sgq(1)\sgq(3)})\right)^{r_q}
\end{equation}
vanishes in $\hstar$. 
\end{prop}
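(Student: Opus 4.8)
The plan is to deduce $\zeta = 0$ from Lemma \ref{topology}: I will exhibit, for each of the $2(2g+n-1)$ Chern-class factors of $\zeta$, a section of the corresponding line bundle so that the whole collection has empty common zero locus. The factor $c_1(L^q_{\sigma_q(1)\sigma_q(2)})$ (and likewise $c_1(L^q_{\sigma_q(1)\sigma_q(3)})$) is realised by a section $s^q_{\sigma_q(1)\sigma_q(2)}(x)$ for a suitably chosen generator $x \neq c_q$. Since $\sum_q r_q - 2g = n-1$, I can assign the $2g$ genus generators $\aas, \bbs$ to the bundles so that every genus generator gets used, and use $n-1$ of the loops $c_i$ as ``links'' joining the active torus bundles; this is exactly the count of double and single pairs appearing in Lemma \ref{nozeros}.

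First I would dispose of the case where all the weight sits on one bundle, which is Lemma \ref{basic}, or more generally Lemma \ref{nozeros} with $Y=\varnothing$. For the general case I would consolidate all the sections onto a single bundle $\vlg$ by repeatedly invoking Lemma \ref{switch}, which exchanges a configuration on $\vmg$ carrying a $c_l$-pair for one on $\vlg$. Because Lemma \ref{switch} is phrased with a \emph{single}-subscript section at the link, I first rewrite each double-subscript link pair: using the relations \eqref{relns} (so that $c_1(L^q_{ij})c_1(L^q_{ik}) = (\clli - \cllj)(\clli - \cllk)$ is a combination of the squares $c_1(L^q_s)^2$) together with Lemma \ref{transpose}, the contribution of a link pair is expressed through single-subscript classes, one for each of the three eigenvalues of $t_l$. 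Each resulting configuration then lives on a single bundle; when the eigenvalue chosen at every link matches the distinguished index of that bundle's permutation, the configuration is precisely of the form in Lemma \ref{nozeros}, whose common zero locus is empty because the sections force every generator into one common block-diagonal shape and the relation $\prod_j \rho(c_j) = \prod_i [\rho(a_i),\rho(b_i)]$ then yields $\lambda_1\cdots\lambda_n = 1$ (the commutators being trivial on the singleton block), contradicting Definition \ref{generic}.

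The hard part is the terms in which the eigenvalue selected at a link does \emph{not} match the distinguished index of the target bundle's permutation: there the block-diagonal shapes contributed by different bundles disagree, the consolidated collection genuinely acquires common zeros, and Lemma \ref{nozeros} no longer applies termwise. I expect to control these by an induction on the number of active bundles, peeling off one link at a time and using the relations \eqref{relns} to show that the non-matching monomials either reduce to configurations with fewer active bundles or cancel against one another, so that $\zeta$ is congruent, modulo monomials already shown to vanish, to a sum of Lemma \ref{nozeros}-type terms. Making this cancellation precise --- checking that the three-fold expansion at every link closes up and leaves only configurations of Lemma \ref{nozeros} type --- is the main obstacle, and is exactly where the combinatorics of the permutations $\sigma_q$ enters.
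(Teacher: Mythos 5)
Your proposal assembles exactly the right ingredients --- Lemma \ref{topology}, the sections $s^q_{jk}(x)$ and $s^q_{j,k}(c_l)$, the relations \eqref{relns}, and Lemmas \ref{transpose}, \ref{switch}, \ref{nozeros} --- and up through the consolidation step it follows the same route as the paper. But it is not a proof: your last paragraph concedes that the terms where ``the eigenvalue selected at a link does not match the distinguished index of the target bundle's permutation'' are unhandled, and the induction-plus-cancellation you propose for them is a hope, not an argument. That unproved step is a genuine gap, and it sits exactly where the content of the proposition lies.

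The gap is, however, an artifact of a misreading rather than a real difficulty: no matching condition is ever needed. In a link section $s^q_{m,s}(c_l)$, as these sections are used in the paper's proof, only the second subscript $s$ names the line bundle (hence the Chern-class factor being realised); the first subscript $m$, the eigenvalue index for $t_l$, is a \emph{free} parameter --- changing it changes the zero locus but not the bundle. The paper exploits this freedom: it expands each link pair into just two terms,
\begin{equation*}
c_1(L^q_{\sigma_q(1)\sigma_q(2)})\,c_1(L^q_{\sigma_q(1)\sigma_q(3)}) \;=\; 2\,c_1(L^q_{\sigma_q(1)})^2 + c_1(L^q_{\sigma_q(2)})\,c_1(L^q_{\sigma_q(3)}),
\end{equation*}
realises the second term by $\bigl(s^q_{\sigma_l(1),\sigma_q(2)}(c_l),\, s^q_{\sigma_l(1),\sigma_q(3)}(c_l)\bigr)$ and the first by $\bigl(s^q_{\sigma_l(2),\sigma_q(1)}(c_l),\, s^q_{\sigma_l(3),\sigma_q(1)}(c_l)\bigr)$, and these two pairs have the \emph{same} zero locus by Lemma \ref{transpose}: both say that the eigenvector of $\rho(c_l)$ with eigenvalue $(t_l)_{\sigma_l(1)\sigma_l(1)}$ is the $\sigma_q(1)^{\text{th}}$ standard basis vector. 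Consequently every term of the expansion, whichever factor is chosen at each $q$, is controlled by the single collection \eqref{concise}: after Lemma \ref{switch}, each link forces the \emph{same} singleton block $\{\sigma_l(1)\}$, merely with the eigenvalue $(t_q)_{\sigma_q(1)\sigma_q(1)}$ of $t_q$ occupying it. That is all Lemma \ref{nozeros} requires --- it is stated for an \emph{arbitrary} assignment $f:[n]\to[3]$ of eigenvalues to the link loops, precisely because genericity (Definition \ref{generic}) forbids $\lambda_1\cdots\lambda_n=1$ for \emph{any} choice of eigenvalues $\lambda_j$ of $t_j$. So every term vanishes termwise and no cancellation, induction, or bookkeeping of permutation combinatorics is needed. (Two smaller inaccuracies: in the paper all links use the one loop $c_l$, repeated $\vert Q\vert - 1$ times, rather than $n-1$ distinct loops, with the loops $c_j$, $j \notin Q$, serving as ordinary generators; and your expansion of a link pair into three squares is legal over $\qqq$ but obscures the pairing with Lemma \ref{transpose} that the two-term expansion above makes automatic.)
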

\begin{proof}
First observe that for a permutation $\sigma$ of $(1,2,3)$, 
\begin{align*}
c_1(L^q_{\sigma(1)\sigma(2)})c_1(L^q_{\sigma(1)\sigma(3)}) &= \left(c_1(L^q_{\sigma(1)})-c_1(L^q_{\sigma(2)})\right)\left(c_1(L^q_{\sigma(1)})-c_1(L^q_{\sigma(3)})\right) \\
&=c_1(L^q_{\sigma(1)})^2 - c_1(L^q_{\sigma(1)})\left(c_1(L^q_{\sigma(2)})+c_1(L^q_{\sigma(3)})\right) + c_1(L^q_{\sigma(2)})c_1(L^q_{\sigma(3)}) \\
&= 2c_1(L^q_{\sigma(1)})^2 + c_1(L^q_{\sigma(2)})c_1(L^q_{\sigma(3)}).
\end{align*}
Let $Q \subseteq [n]$ be the subset $\{q \in [n] \mid r_q \geq 1\}$, and rewrite $\zeta$ as 
\begin{equation*}
\zeta = \prod_{q\in Q} \left(c_1(L^q_{\sgq(1)\sgq(2)})c_1(L^q_{\sgq(1)\sgq(3)})\right)^{r_q - 1}\left(2c_1(L^q_{\sgq(1)})^2 + c_1(L^q_{\sgq(2)})c_1(L^q_{\sgq(3)})\right).
\end{equation*}
For each term in this polynomial we wish to find a collection of sections of the line bundles appearing in that term with no common zeros.

Pick $l \in Q$. By Lemma \ref{transpose}, for any $q \in Q \setminus \{l\}$ the section $\left(s^q_{\sgl(1),\sgq(2)}(c_l), s^q_{\sgl(1),\sgq(3)}(c_l)\right)$ of the line bundle $L^q_{\sgq(2)} \oplus L^q_{\sgq(3)}$ has the same zero locus as the section $\left(s^q_{\sgl(2),\sgq(1)}(c_l), s^q_{\sgl(3),\sgq(1)}(c_l)\right)$ of the bundle $(L^q_{\sgq(1)})^{\oplus 2}$. Thus it suffices to find a non-vanishing section of 
\begin{equation*}
\bigoplus_{q \in Q}\left(\left(L^q_{\sgq(1)\sgq(2)}\oplus L^q_{\sgq(1)\sgq(3)}\right)^{\oplus r_q - 1}\oplus L^q_{\sgq(2)} \oplus L^q_{\sgq(3)}\right).
\end{equation*}
Reordering the bundles if necessary, we may assume $Q = \left[\vert Q \vert \right]$ and choose $l = \vert Q \vert$. Take $X = \{x_1, \ldots, x_{2g+n-l}\}$, where 
\begin{equation*}
x_i = \begin{cases} a_i & 1 \leq i \leq g \\ b_{i-g} & g+1 \leq i \leq 2g \end{cases},
\end{equation*}
and $\{x_{2g+1}, \ldots, x_{2g+n-l}\} =  \{c_j \mid j \notin Q\}$. 
Consider the collection of sections 
\begin{equation} \label{first}
\begin{split}
\bigcup_{q=1}^{l-1} \{ \sqq_{\sgl(1),\sgq(2)}(c_l), \sqq_{\sgl(1),\sgq(3)}(c_l)\sqq_{\sgq(1)\sgq(2)}(x_i), \sqq_{\sgq(1)\sgq(3)}(x_i) \mid r_1 + \cdots + r_{q-1} < i \leq r_1 + \cdots + r_q - q \} \\\cup \{\sll_{\sgl(1)\sgl(2)}(x_i), \sll_{\sgl(1)\sgl(3)}(x_i) \mid r_1 + \cdots + r_{l-1} - l + 1 < i \leq r_1 + \cdots + r_l - l + 1 \}. 
\end{split}
\end{equation}
By Lemma \ref{switch}, this collection \eqref{first} has the same zero locus as the collection 
\begin{equation} \label{second}
\begin{split}
\bigcup_{q=1}^{l-1} \{ \sll_{\sgq(1),\sgl(2)}(c_q), \sll_{\sgq(1),\sgl(3)}(c_q)\sll_{\sgl(1)\sgl(2)}(x_i), \sll_{\sgl(1)\sgl(3)}(x_i) \mid r_1 + \cdots + r_{q-1} < i \leq r_1 + \cdots + r_q - q \} \\\cup \{\sll_{\sgl(1)\sgl(2)}(x_i), \sll_{\sgl(1)\sgl(3)}(x_i) \mid r_1 + \cdots + r_{l-1} - l + 1 < i \leq r_1 + \cdots + r_l - l + 1 \}. 
\end{split}
\end{equation}
This latter collection \eqref{second} may be more concisely expressed as 
\begin{equation} \label{concise}
\begin{split}
\left\{\sll_{\sgl(1)\sgl(2)}(x), \sll_{\sgl(1)\sgl(3)}(x) \mid x \in \{ \aas, \bbs, c_j \mid j \notin Q\} \right\} \\ \cup \bigcup_{q \in Q \setminus \{l\}}\{\sll_{\sgq(1),\sgl(2)}(c_q), \sll_{\sgq(1),\sgl(3)}(c_q)\}.
\end{split}
\end{equation}
By Lemma \ref{nozeros}, this collection \eqref{concise} has no common zeros, so we are done.
\end{proof}
\section{Combinatorial proof of the main theorem}
In this section we will show that a product 
\begin{equation*}
\prod_{\substack{1 \leq i,j \leq 3 \\ 1 \leq q \leq n}}\clqij^{d^q_{ij}}
\end{equation*}
is equivalent in $\hstar$ to a linear combination of monomials of the form \eqref{zetaprop} shown to vanish in Proposition \ref{zetavanishes} whenever $\sum_{q,i,j}d^q_{ij}$ is sufficiently large. To simplify notation, let $\cqij := \clqij$. Recall that for each $1 \leq q \leq n$ and distinct $i,j,k \in \{1,2,3\}$, we have $\cqij = -c^q_{ji}$ and $\cqij + \cqjk = \cqik$. We begin with a combinatorial lemma.
\begin{lem} \label{comb}
Fix $q \in [n]$ and let $R^q$ be the subring of $\hstar$ generated by the $\cqij$ (for $1 \leq i \neq j \leq 3$). Let $d$ be a positive integer and suppose $\zeta = (c^q_{12})^a (c^q_{23})^b (c^q_{31})^c$, where $a+b+c \geq 3d-1$. Then there exist polynomials $\phi_1, \phi_2, \phi_3 \in R^q$ such that 
\begin{equation} \label{desired}
\zeta = \phi_1 (c^q_{12}c^q_{13})^d + \phi_2 (c^q_{23}c^q_{21})^d + \phi_3 (c^q_{31}c^q_{32})^d.
\end{equation}
\end{lem}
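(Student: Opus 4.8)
The plan is to reduce the statement to a purely combinatorial fact about monomials in three linear forms in two variables. Using the relations $c^q_{ij} = -c^q_{ji}$ and $c^q_{ij} + c^q_{jk} = c^q_{ik}$, every generator of $R^q$ is a $\mathbb{Q}$-linear combination of $\ell_1 := c^q_{12}$ and $\ell_2 := c^q_{23}$, so $R^q$ is a quotient of $\mathbb{Q}[\ell_1,\ell_2]$ and it suffices to establish \eqref{desired} in the polynomial ring, the $\phi_i$ being the images of the polynomial coefficients. Setting $\ell_3 := c^q_{13} = \ell_1 + \ell_2$, I would first record that $\zeta = (-1)^c \ell_1^a \ell_2^b \ell_3^c$ and that the three products whose $d$-th powers appear on the right of \eqref{desired} are, up to sign, exactly the pairwise products of the $\ell_i$: namely $c^q_{12}c^q_{13} = \ell_1\ell_3$, $c^q_{23}c^q_{21} = -\ell_1\ell_2$, and $c^q_{31}c^q_{32} = \ell_2\ell_3$. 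Thus \eqref{desired} is equivalent to the membership $\zeta \in I := \big((\ell_1\ell_3)^d,\ (\ell_1\ell_2)^d,\ (\ell_2\ell_3)^d\big)$.

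The heart of the argument is the claim that every monomial $\ell_1^{e_1}\ell_2^{e_2}\ell_3^{e_3}$ with $e_1 + e_2 + e_3 \geq 3d-1$ lies in $I$; since $\zeta$ is such a monomial, this finishes the proof. I would prove the claim by strong induction on $\max(e_1,e_2,e_3)$. If at least two exponents are $\geq d$, the monomial is divisible by the corresponding pairwise product to the $d$-th power (e.g.\ $e_1,e_3\geq d$ gives divisibility by $(\ell_1\ell_3)^d$), so it lies in $I$ at once. Otherwise at most one exponent is $\geq d$; a short count using $e_1+e_2+e_3 \geq 3d-1$ rules out the all-small case and shows that in fact \emph{exactly} one exponent is $\geq d$ and is $\geq d+1$, while the other two are $\leq d-1$.

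Say $\ell_1$ carries the large exponent (the three forms play symmetric roles, each being a $\pm$-combination of the other two, so the same move applies in every case). Substituting $\ell_1 = \ell_3 - \ell_2$ into a single factor gives
\[
\ell_1^{e_1}\ell_2^{e_2}\ell_3^{e_3} = \ell_1^{e_1-1}\ell_2^{e_2}\ell_3^{e_3+1} - \ell_1^{e_1-1}\ell_2^{e_2+1}\ell_3^{e_3}.
\]
Both monomials on the right have total degree $\geq 3d-1$, and since $e_2+1,\, e_3+1 \leq d \leq e_1-1$, both have maximum exponent at most $e_1-1$, strictly smaller than before. The inductive hypothesis places both in $I$, hence so is the original monomial. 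The induction is well-founded because every ``bad'' monomial has maximum exponent $\geq d+1$, so the maximum strictly decreases at each step and the process terminates in monomials with two exponents $\geq d$.

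I expect the main obstacle to be organising this induction cleanly, in particular verifying that in the bad case the dominant exponent is genuinely $\geq d+1$: this is exactly what guarantees that decrementing it keeps the two companion exponents below it and strictly reduces the maximum, and hence that the recursion terminates. Everything else is bookkeeping—matching signs between the $c^q_{ij}$ and the $\ell_i$, and transporting the resulting identity in $\mathbb{Q}[\ell_1,\ell_2]$ down to $R^q$ to read off the coefficients $\phi_1,\phi_2,\phi_3$.
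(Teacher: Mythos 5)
Your proof is correct, and it rests on the same two ingredients as the paper's: the linear relation expressing one generator in terms of the other two (your $\ell_1 = \ell_3 - \ell_2$ is the paper's $c^q_{12} = -c^q_{31} - c^q_{23}$), and the pigeonhole count showing that $a+b+c \geq 3d-1$ forbids two of the relevant exponents from both being $< d$. The organization, however, is genuinely different. The paper argues in one shot: assuming without loss of generality $a \geq d$, it writes $\zeta = (c^q_{12})^d(-c^q_{31}-c^q_{23})^{a-d}(c^q_{23})^b(c^q_{31})^c$, expands the middle factor binomially, and observes that in every term of the expansion either the $c^q_{23}$-exponent or the $c^q_{31}$-exponent is already $\geq d$, so each term carries two $d$-th powers and is of the form \eqref{desired}. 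You instead substitute into a single factor at a time and run a strong induction on the maximal exponent, with a termination argument (every ``bad'' monomial has dominant exponent $\geq d+1$, and the substitution strictly decreases the maximum while preserving total degree). The paper's route buys brevity and explicit closed-form coefficients $\phi_i$; yours buys robustness, since each step is trivial to verify and no binomial bookkeeping is required --- indeed the paper's displayed expansion contains typos (the upper index of the sum and of the binomial coefficient should be $a-d$, not $a+b+c-d$, and $b$ and $c$ are interchanged in the exponents), which do not affect its argument but illustrate the bookkeeping burden your version avoids. Your preliminary reduction to ideal membership in $\mathbb{Q}[\ell_1,\ell_2]$ is also a clean way to dispose of signs: the paper leaves implicit the identities, such as $(c^q_{12})^d(c^q_{31})^d = (-1)^d(c^q_{12}c^q_{13})^d$, needed to recognize its terms as instances of \eqref{desired}, whereas you absorb all signs into the $\phi_i$ once and for all.
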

\begin{proof} At least one of $a,b,c$ must be $\geq d$ (otherwise $a+b+c \leq 3d-3$); without loss of generality assume $a \geq d$. Then 
\begin{align*}
\zeta &= (c^q_{12})^d(-c^q_{31}-c^q_{23})^{a-d}(c^q_{23})^b(c^q_{31})^c \\
&= \sum_{r=0}^{a+b+c-d} \binom{a+b+c-d}{r} (c^q_{12})^d(-1)^{a-d}(c^q_{31})^{a-d-r+b}(c^q_{23})^{r+c}.
\end{align*}
Observe that for each $r$, we have $a-d-r+b \geq d$ or $r+c \geq d$, as otherwise $a+b+c \leq 3d-2$. Hence $\zeta$ is in the desired form \eqref{desired}.
\end{proof}
We are now ready to prove our main theorem.
\begin{thm} \label{main}
Let $d^q_{ij}$ be a non-negative integer for each $1\leq q \leq n$ and each pair $(i,j)$ of distinct elements in $\{1,2,3\}$. Then the product 
\begin{equation*}
\prod_{\substack{1 \leq q \leq n \\ 1 \leq i,j \leq 3 \\ i \neq j}} c_1(\lqij)^{d^q_{ij}}
\end{equation*}
vanishes in $\hstar$ whenever  
\begin{equation*}
\sum_{\substack{1 \leq q \leq n \\ 1 \leq i,j \leq 3 \\ i\neq j}} d^q_{ij} \geq 6g + 4n -5.
\end{equation*}
\end{thm}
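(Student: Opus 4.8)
The plan is to process the product one block at a time in the index $q$, reducing each block to a single monomial in the three classes $c^q_{12}, c^q_{23}, c^q_{31}$, and then to apply Lemma \ref{comb} within each block so as to expose a factor of exactly the shape handled by Proposition \ref{zetavanishes}. The individual lemmas already in place do the geometric and algebraic work; what remains is to distribute a budget of ``powers'' correctly across the $n$ blocks, and this is where the threshold $6g+4n-5$ is consumed.

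First I would fix $q$ and set $D_q = \sum_{i\neq j} d^q_{ij}$, so the hypothesis reads $\sum_{q=1}^n D_q \geq 6g+4n-5$. Using \emph{only} the antisymmetry relation $\cqij = -c^q_{ji}$ from \eqref{relns}, every generator $\cqij$ is $\pm$ one of $c^q_{12}, c^q_{23}, c^q_{31}$, so the $q$-th block factor collapses to a single monomial
\[
\prod_{i\neq j}(\cqij)^{d^q_{ij}} = \varepsilon_q\,(c^q_{12})^{a_q}(c^q_{23})^{b_q}(c^q_{31})^{c_q},
\]
with $\varepsilon_q = \pm 1$ and $a_q+b_q+c_q = D_q$. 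No binomial expansion is incurred here, because I retain all three of $c^q_{12}, c^q_{23}, c^q_{31}$ rather than eliminating one via $c^q_{12}+c^q_{23}+c^q_{31}=0$. This is precisely the input form required by Lemma \ref{comb}.

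Next I would choose non-negative integers $r_1,\ldots,r_n$ with $\sum_q r_q = 2g+n-1$ and $r_q \leq \lfloor (D_q+1)/3\rfloor$ for every $q$. The point is that the hypothesis forces the maximal attainable total to be large enough: from $\lfloor (D_q+1)/3\rfloor \geq (D_q-1)/3$ one gets $\sum_q \lfloor (D_q+1)/3\rfloor \geq (\sum_q D_q - n)/3 \geq (6g+4n-5-n)/3 = 2g+n-\tfrac{5}{3}$, and since the left-hand side is an integer it is $\geq 2g+n-1$. Because the $r_q$ can be lowered one unit at a time from their maximal values all the way to zero, the intermediate total $2g+n-1$ is attainable. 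For each $q$ with $r_q>0$ the resulting inequality $D_q \geq 3r_q-1$ holds, so Lemma \ref{comb} applies with $d=r_q$ and rewrites that block as $\sum_{\tau=1}^{3}\phi_{q,\tau}\,(c^q_{\sigma(1)\sigma(2)}c^q_{\sigma(1)\sigma(3)})^{r_q}$ for suitable $\phi_{q,\tau}\in R^q$ and the three cyclic permutations $\sigma$; blocks with $r_q=0$ are left untouched.

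Finally, expanding the product over $q$ of these rewritten blocks yields a sum of terms, each of which carries the factor $\prod_{q=1}^n (c^q_{\sigma_q(1)\sigma_q(2)}c^q_{\sigma_q(1)\sigma_q(3)})^{r_q}$ for some permutations $\sigma_q$, with the convention that this factor is $1$ whenever $r_q=0$. Since $\sum_q r_q = 2g+n-1$, that factor is exactly a monomial $\zeta$ of the kind shown to vanish in Proposition \ref{zetavanishes}; as every term is a ring multiple of such a $\zeta$, the entire product vanishes in $\hstar$. I expect the main obstacle to be the counting step rather than any geometry: verifying that the per-block ceilings $\lfloor (D_q+1)/3\rfloor$ sum to at least $2g+n-1$ is precisely where the bound $6g+4n-5$ is used, the estimate is tight, so the bookkeeping must be carried out carefully to avoid losing a single unit.
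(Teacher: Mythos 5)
Your proof is correct and takes essentially the same route as the paper: factor the product into blocks indexed by $q$, normalise each block via antisymmetry, apply Lemma \ref{comb} blockwise to expose factors $(c^q_{\sigma_q(1)\sigma_q(2)}c^q_{\sigma_q(1)\sigma_q(3)})^{r_q}$, and conclude with Proposition \ref{zetavanishes}, with the threshold $6g+4n-5$ consumed by the same counting estimate. The only (harmless) difference is that you cap the $r_q$ so that $\sum_q r_q = 2g+n-1$ exactly, whereas the paper takes each $r_q$ maximal and deduces $\sum_q r_q \geq 2g+n-1$; your variant is marginally tidier since Proposition \ref{zetavanishes} is stated with equality, and you also make explicit the reduction to the generators $c^q_{12}, c^q_{23}, c^q_{31}$ and the treatment of blocks with $r_q=0$, which the paper leaves implicit.
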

\begin{proof} Let 
\begin{equation*}
\zeta = \prod_{\substack{1 \leq q \leq n \\ 1 \leq i,j \leq 3 \\ i \neq j}} c_1(\lqij)^{d^q_{ij}},
\end{equation*}
where each $d^q_{ij}$ is a nonnegative integer and $\sum_{\substack{1 \leq q \leq n \\ 1 \leq i,j \leq 3 \\ i\neq j}} d^q_{ij} \geq 6g + 4n -5$. Factorise $\zeta$ as $\zeta = \zeta_1 \cdots \zeta_n$, where for each fixed $q$, $\zeta_q$ is a product of monomials $\cqij$. For each $1 \leq q \leq n$, let $r_q$ be the largest integer such that $\sum_{1 \leq i,j \leq 3} d^q_{ij} \geq 3r_q -1$. By Lemma \ref{comb}, we may rewrite each $\zeta_q$ as 
\begin{equation*}
\zeta_q = \phi^q_1 (c^q_{12}c^q_{13})^{r_q} + \phi_2 (c^q_{23}c^q_{21})^{r_q} + \phi_3 (c^q_{31}c^q_{32})^{r_q};
\end{equation*} taking the product $\prod_{q=1}^n \zeta_q$ we get an expression for $\zeta$ in which each term is a monomial of the form 
\begin{equation} \label{zetaform}
\prod_{q=1}^n \phi^q_{\sigma(1)}\left(c^q_{\sgq(1)\sgq(2)}c^q_{\sgq(1)\sgq(3)}\right)^{r_q},
\end{equation} where each $\sigma_q$ is a permutation of $(1,2,3)$. 
Observe that 
\begin{equation*}
\sum_{\substack{1 \leq q \leq n \\ 1 \leq i,j \leq 3 \\ i \neq j}}d^q_{ij} \leq \sum_{q=1}^n
(3r_q+1) = 3(r_1+ \cdots + r_n) +n.\end{equation*}
If $r_1 + \cdots + r_n < 2g+n-1$, then 
\begin{equation*}
\sum_{\substack{1 \leq q \leq n \\ 1 \leq i,j \leq 3 \\ i \neq j}}d^q_{ij} \leq 3(2g+n-2) + n = 6g+4n-6,\end{equation*}
contradicting our assumption that $\sum_{\substack{1 \leq q \leq n \\ 1 \leq i,j \leq 3 \\ i\neq j}} d^q_{ij} \geq 6g + 4n -5$. So $r_1 + \cdots + r_n \geq 2g+n-1$. Hence by Proposition \ref{zetavanishes}, each term \eqref{zetaform} in $\zeta$ vanishes in $\hstar$. 
\end{proof}
\small
\bibliographystyle{plain}

\bibliography{/home/adina/Dropbox/Maths/Math/refs}

\end{document}